\documentclass[reqno]{amsart}
\usepackage[utf8]{inputenc}
\usepackage[top=1.4in, bottom=1.2in, left=1.2in, right=1.2in]{geometry}

\usepackage{amssymb,amsfonts,amsmath,amsthm,tikz}
\usetikzlibrary{arrows.meta}
\usepackage{enumerate}
\usepackage[colorlinks=true]{hyperref}
\usepackage[alphabetic]{amsrefs}
\usepackage{cmap}
\usepackage{stmaryrd}
\usepackage[sc,osf]{mathpazo}

\newtheorem{thm}{Theorem}%[section]
\newtheorem{cor}[thm]{Corollary}
\newtheorem{prop}[thm]{Proposition}

\theoremstyle{definition}
\newtheorem{defn}[thm]{Definition}

\newtheorem{exmp}[thm]{Example}

\theoremstyle{remark}
\newtheorem{rem}[thm]{Remark}

\newcommand{\Z}{\mathbb Z}

\newcommand{\gl}{\mathfrak{gl}}

\DeclareMathOperator{\KR}{KR}

\newcommand{\lra}[1]{\langle #1 \rangle}
\newcommand{\lrb}[1]{\llbracket #1 \rrbracket}

\tikzset{anchorbase/.style={baseline={([yshift=-0.5ex]current bounding box.center)}}}

\begin{document}

\title{A diagrammatic grading bound for colored $\mathfrak{gl}_N$ link homology}

\author{Hongjian Yang}
\address{Department of Mathematics, Stanford University, Stanford, CA 94305, USA}
\email{yhj@stanford.edu}

\subjclass[2020]{57K18, 57K33}
\keywords{colored link homology, $\gl_N$ web, Thurston--Bennequin number}
\date{\today}

\begin{abstract}
    We establish a grading bound for colored $\mathfrak{gl}_N$ link homology that can be read off from any link diagram. In the context of contact geometry, this gives a bound on the Thurston--Bennequin number of a Legendrian link from $\mathfrak{gl}_N$ link homology, in the spirit of Ng \cite{ng2005legendrian}. 
\end{abstract}

\maketitle

Fix integers $N\ge 2$ and $0\le m\le N$. In this note, we consider the bigraded $m$-colored $\gl_N$ link homology for links in $S^3$. For background, see \cite{ehrig2018functoriality} and the references therein. We follow the grading conventions in \cite{ehrig2018functoriality}. While defined in a combinatorial manner, colored $\gl_N$ homology is difficult to compute in practice, and a complete nontrivial computation is currently only known for the Hopf link and $T(2,2n+1)$ torus knots \cites{wang2025colored,wang2026link}. Our main result is a bound on the supporting grading of colored $\gl_N$ homology that can be read off from any given diagram.

\begin{thm}\label{thm:main thm}
    Let $D$ be a diagram of an oriented link $K$ with $n$ crossings and $k$ local maxima. Then the $m$-colored $\gl_N$ link homology $\KR_{N,m}^{i,j}(K)$ vanishes when \[|j-i-m(N-m)w(D)|>m(N-m)k+\min\{m,N-m\}^2n,\]where $w(D)$ is the writhe of the diagram $D$.
\end{thm}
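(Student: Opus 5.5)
The plan is to bound the $q$-degrees that can appear in the chain complex computing the homology, measured against the homological degree. Concretely, we put $D$ in Morse position with $k$ local maxima (hence $k$ local minima) and build the colored $\gl_N$ complex $\lrb{D}$ as in \cite{ehrig2018functoriality}. For each crossing of sign $\pm$ we use the standard complex for an $m$-colored crossing, which has $m+1$ terms indexed by $s=0,\dots,m$. In the $s$-th term the resolved web has a rung of thickness $s$ (or $m-s$, depending on conventions), and the homological and quantum shifts are $\pm s$ and of the form $\mp s(\ldots)$. After the overall normalization these shifts contribute exactly $m(N-m)w(D)$ to $j-i$. The whole complex is then a tensor product over crossings, and each chain group is a direct sum of shifted colored $\gl_N$ web evaluations (foam state spaces) of closed webs $\Gamma$.

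The first key step is a grading bound for a single closed web. We want to show that the state space (equivalently the $q$-dimension of the web evaluation) $\lra{\Gamma}$ of a closed web obtained by resolving $D$ is supported in $q$-degrees $|j|\le m(N-m)k + c(\Gamma)$. Here $c(\Gamma)$ is a correction coming from the rungs. We would prove this by cutting $\Gamma$ at the Morse height function. The cups and caps of $m$-colored strands contribute the graded dimension $\begin{bmatrix} N\\ m\end{bmatrix}$ per circle, and that is symmetric of span $m(N-m)$, giving $m(N-m)$ per maximum. Each crossing resolution involves a split-and-merge pair, that is, digons and square relations. Decomposing via the MOY relations (digon removal produces quantum binomials $\begin{bmatrix} m\\ s\end{bmatrix}$ or $\begin{bmatrix} N-m+\cdots\\ s\end{bmatrix}$), we aim to bound the extra spread from the rung at each crossing by $s(m-s)$ plus shifts. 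More cleanly, we would argue inductively with the Morse decomposition: the web is built from elementary pieces (cups, caps, merge/split vertices), and the $q$-degree of the image of the bottom vector space under each elementary foam/map changes by a controlled amount. The spread then gets bounded by $m(N-m)$ per cap and by a quantity at most $\min\{m,N-m\}^2$ per crossing, after combining with the homological shift $s$ of that term.

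The second step combines the two bounds. In the chain group with resolution data $(s_c)_c$, the quantity $j-i-m(N-m)w(D)$ equals the internal web degree plus $\sum_c(\text{crossing-local shift depending on } s_c)$. So it suffices to show that, for each crossing, the local shift together with the web-spread contribution of that rung lies in $[-\min\{m,N-m\}^2,\min\{m,N-m\}^2]$, uniformly in $s_c$. Since homology is a subquotient of the chain groups, the vanishing follows. Using the duality $\KR_{N,m}\cong\KR_{N,N-m}$ (via the $m\leftrightarrow N-m$ symmetry of webs, flipping all edge labels), we may assume $m\le N-m$, so the crossing term becomes $m^2$.

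The main obstacle is the per-crossing estimate: showing that the local shift plus the spread introduced by a thickness-$s$ rung is at most $m^2$ in absolute value for all $0\le s\le m$. The approach to try first is to exploit the fact that the crossing complex is built from $\Gamma_s$ webs whose evaluations are related by $\begin{bmatrix} m\\ s\end{bmatrix}$-type factors, of span $2s(m-s)$. Combined with the linear shift in $s$, this gives a quadratic in $s$ whose maximum over $[0,m]$ we then check is $\le m^2$. If the naive web-spread bound is too weak, the fallback is a filtration or Gaussian-elimination argument: simplify the crossing complex locally to a complex whose terms are explicitly graded, before tensoring.
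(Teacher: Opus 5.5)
Your outer skeleton agrees with the paper's. You reduce to $2m\le N$ by duality; the precise statement is $\KR_{N,m}^{i,j}(K)\cong\KR_{N,N-m}^{i,j}(-K)$, which is harmless because $-D$ has the same $n$, $k$ and $w(D)$. You then write each chain group as a sum of state spaces $\lrb{\Gamma}_N$ of closed webs, bound their $q$-degrees, and pass to homology as a subquotient.

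\textbf{The gap.} The step that carries all the content, namely $\max\deg_q\lra{\Gamma}_N\le m(N-m)k+m^2n$ for every web $\Gamma$ in the cube of resolutions, is never established. The mechanism you propose for its per-crossing part does not work, for two reasons.

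First, the local webs $\Gamma_s$ of a Rickard complex are not related to each other by scalar factors such as $\begin{bmatrix} m\\ s\end{bmatrix}$. They are linearly independent, and digon removal applies only when a digon is actually present in the closed web. Already for $m=1$, the braid closures of the identity web and the thick-edge web give $[N]^2$ and $[N][N-1]$. After stacking one extra thick-edge web they give $[N][N-1]$ and $[2][N][N-1]$. The two ratios differ, so no fixed factor exists.

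Second, your grading bookkeeping is inconsistent. You first say, correctly, that the Rickard shifts contribute exactly $m(N-m)w(D)$ to $j-i$, and then you add an $s$-dependent crossing-local shift to $j-i$. In the conventions used, the $q$- and $t$-shifts of the Rickard terms match. So $j-i-m(N-m)w(D)$ is just the internal $q$-degree of $\lrb{\Gamma}_N$, there is no homological shift to combine with, and the whole $m^2$ per crossing must come from the web evaluation itself. A per-crossing contribution the size of a binomial half-span, $s(m-s)\le m^2/4$, cannot be right. For $N=2$, $m=1$ it would give $|j-i-w(D)|\le k$, which fails for the standard trefoil diagram (Remark~\ref{rem:sharper}).

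\textbf{The missing idea.} Work directly with the MOY state sum, after putting $D$ in shy (train-track) position. Then neither resolving crossings nor passing to the parallel resolution creates new local maxima, so every web has exactly the $k$ maxima of $D$, each on an $m$-labeled arc. For each coloring $\sigma$, the $q$-exponent splits into vertex weights plus a rotation term.

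\begin{itemize}
\item Each vertex weight is at most $\frac12 a(v)b(v)$, because $\pi\ge 0$. The four vertices of a Rickard term with side label $i$ have thin-edge labels $(m-i,i)$, $(m,m-i)$, $(m,m-i)$, $(m-i,i)$. Their total is exactly $m^2-i^2\le m^2$, and this is the per-crossing term, not a binomial spread plus a homological shift.
\item For the rotation term, assign each closed curve of the parallel resolution to a local maximum where it attains its global height. The curves assigned to one maximum are push-offs of a single $m$-labeled edge, so they carry distinct colors and share one rotation number. Their contribution is therefore at most $r(N-r)\le m(N-m)$, using $r\le m\le N/2$.
\end{itemize}

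Your heuristic of $\begin{bmatrix} N\\ m\end{bmatrix}$ per circle does not substitute for this. The closed curves of a web's parallel resolution are not the circles of a smoothing, and they may pass through several maxima.
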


Several similar bounds have already been established in the context of link homology theories at different levels of generality \cites{ng2005legendrian,ng2008skein,wu2008braids,wu2013colored}. The one most similar to ours is given by Wu \cite{wu2013colored}*{Proposition 1.1}. The novelty of Theorem~\ref{thm:main thm} is that it does not require any special form of the diagram, and the proof is conceptually different from and simpler than the previous one in \cite{wu2013colored}; in particular, it does not rely on the composition product for MOY graphs \cites{wagner2008khovanov,wu2013colored}. The conclusion is also stronger than \cite{wu2013colored}*{Proposition 1.1} when $m>N/2$.

As an immediate corollary, we obtain a Thurston--Bennequin number bound from the $\gl_N$ homology, in the spirit of Ng's result for Khovanov homology \cite{ng2005legendrian}. Note that our bound carries a crossing number correction which is absent from \cite{ng2005legendrian}; see Remark~\ref{rem:lasagna} below.

\begin{cor}\label{cor:legendrian tb bound}
    Let $\mathcal{K}$ be a Legendrian link in the standard contact $S^3$ with underlying smooth link $K$. Then \[\min\{j-i\colon\KR_{N,m}^{i,j}(K)\not=0\}\ge m(N-m)\operatorname{tb}(\mathcal{K})-\min\{m,N-m\}^2c(\mathcal{K}),\]where $\operatorname{tb}(\mathcal{K})$ is the Thurston--Bennequin number of $\mathcal{K}$, and $c(\mathcal{K})$ is the minimal number of crossings of front projections in the Legendrian link type of $\mathcal{K}$.
\end{cor}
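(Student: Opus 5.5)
The plan is to deduce the corollary directly from Theorem~\ref{thm:main thm}, applied to a suitable planar diagram coming from a front projection. Choose a Legendrian representative $\mathcal{K}'$ in the Legendrian link type of $\mathcal{K}$ whose front projection has exactly $c=c(\mathcal{K})$ crossings. The Thurston--Bennequin number is a Legendrian isotopy invariant, so $\operatorname{tb}(\mathcal{K}')=\operatorname{tb}(\mathcal{K})$, and the underlying smooth link is still $K$.

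For a front diagram with $c_f$ crossings and writhe $w_f$, we have $\operatorname{tb}=w_f-\#\{\text{right cusps}\}$. Smoothing the cusps turns the front into an ordinary link diagram $D$ of $K$. The crossings and writhe are unchanged, so $n=c$ and $w(D)=w_f$. For the local maxima, I will rotate so that the ``height'' direction for maxima is the $x$-direction of the front. Then the local maxima of $D$ (say, the rightmost points) are exactly the right cusps. The number of right cusps is $k=w(D)-\operatorname{tb}(\mathcal{K})$.

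The one step needing care is checking that, after smoothing, the cusps are the only extrema in the horizontal direction. This holds because away from cusps a front is a graph over the $x$-axis, since $dz=y\,dx$ forbids vertical tangencies. So the rotated diagram $D$ has exactly $k$ local maxima, and Theorem~\ref{thm:main thm} applies to it with $n$ crossings and $k$ maxima.

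Now take a nonzero group $\KR_{N,m}^{i,j}(K)$. By Theorem~\ref{thm:main thm},
\[ j-i\ \ge\ m(N-m)w(D)-m(N-m)k-\min\{m,N-m\}^2 n. \]
Substituting $k=w(D)-\operatorname{tb}(\mathcal{K})$ makes the writhe terms cancel:
\[ j-i\ \ge\ m(N-m)\operatorname{tb}(\mathcal{K})-\min\{m,N-m\}^2\,c(\mathcal{K}). \]
Taking the minimum over all nonzero groups gives the claimed inequality. Orientation conventions (the sign of the writhe for fronts, and left versus right cusps) only matter up to a mirror symmetry, which leaves $\operatorname{tb}$ unchanged. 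I would fix these conventions so that the formula $\operatorname{tb}=w-\#\{\text{right cusps}\}$ holds as stated.
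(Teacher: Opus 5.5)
Your proposal is correct and is essentially the paper's argument: take a front with $c(\mathcal{K})$ crossings, rotate it by $90^\circ$ and round the cusps so that the $k$ local maxima are the right cusps, use $\operatorname{tb}(\mathcal{K})=w(D)-k$, and substitute into Theorem~\ref{thm:main thm} so that the writhe terms cancel. Your check that a front has no vertical tangencies away from cusps, so no other extrema appear, is a detail the paper leaves implicit.
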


\begin{proof}
    Choose a front projection $D$ of $\mathcal{K}$ that realizes the minimal number of crossings in the Legendrian link type of $\mathcal{K}$. Apply Theorem~\ref{thm:main thm} to the diagram obtained by rotating $D$ by $90^\circ$ and rounding the cusps. The conclusion follows by noticing that $k$ is equal to half of the number of cusps in $D$, and $\operatorname{tb}(\mathcal{K})=w(D)-k$.
\end{proof}

\begin{rem}\label{rem:lasagna}
    The initial motivation for studying the grading bound as in Theorem \ref{thm:main thm} was to prove a vanishing theorem for $\gl_N$ skein lasagna modules \cite{morrison2022invariants} in the style of \cite{ren2024khovanov}*{Theorem 1.4}. Unfortunately, this is prevented by the second term in the estimate involving crossing numbers in Theorem \ref{thm:main thm}, which is due to the presence of thick edges in the $\gl_N$ link homology, in contrast to \cite{ng2005legendrian}. In this sense, \cite{ren2024khovanov}*{Theorem 1.4} seems to essentially rely on the unoriented model of Khovanov homology. The same term also keeps Corollary~\ref{cor:legendrian tb bound} weaker than \cite{ng2005legendrian}*{Theorem 1} when $N=2$ and $m=1$, where the latter does not have the crossing number term. 
\end{rem}

%Another corollary is a vanishing theorem for $\mathfrak{gl}_N$ skein lasagna modules in the spirit of \cite{ren2024khovanov}*{Theorem 1.4}.

%\begin{thm}\label{thm:lasagna vanishes}
 %  Let $K$ be a knot in $S^3$. Assume that the knot trace $X_n(K)$ with framing $n\ge -\operatorname{TB}(-K)$ embeds in some $4$-manifold $X$. Then $S^N_0(X,L)=0$ for any $L\subset \partial X$.
%\end{thm}

%\begin{cor}
 %   The $\gl_N$ skein lasagna module of $S^2\times S^2$ vanishes for all $N$.
%\end{cor}

We say a link diagram is \textit{shy} if it has only finitely many local minima and maxima in the $y$ direction, which are away from the crossings. It is clear that any link diagram can be perturbed to a shy diagram by planar isotopy. This is in contrast to the braid position condition: it could take a considerable number of Reidemeister moves to transform a general diagram into a braid closure. The idea behind the proof of Theorem~\ref{thm:main thm} is very simple: resolving crossings in a shy diagram does not increase the number of local maxima, and every simple closed curve in the plane must have at least one local maximum.

A key difference between our setting and Ng's result is the presence of trivalent vertices and, globally, $\gl_N$ webs. Recall that a \textit{$\gl_N$ web} is a planar oriented trivalent graph $\Gamma$ with vertex set $V(\Gamma)$ and edge set $E(\Gamma)$. Each edge $e$ is equipped with a label $l(e)\in\{0,1,\dots,N\}$, subject to the flow condition at each vertex $v\in V(\Gamma)$. For each vertex $v$, let $l(v)$ and $r(v)$ be the thin edges, i.e., the edges with smaller labels, adjacent to $v$, such that $l(v)$ is on the left with respect to the orientation of $\Gamma$. Let $a(v)$ and $b(v)$ be the labels on $l(v)$ and $r(v)$ respectively.

For our purposes, we require that $\gl_N$ webs also carry some geometric structure. 

\begin{defn}
    A $\gl_N$ web $\Gamma$ is said to be \textit{shy} if the following conditions hold.\begin{enumerate}[(a)]
        \item The tangency directions of the three edges adjacent to each trivalent vertex of $\Gamma$ agree. In other words, the underlying curve of $\Gamma$ is a \textit{train track}. 

        \item The tangency direction at each trivalent vertex is not parallel to the $x$ or $y$ axes.

        \item There are only finitely many local extrema in the $y$ direction.
    \end{enumerate}
\end{defn}

\begin{exmp}\label{exmp:shy}
    Figure~\ref{fig:shy theta} shows three diagrams of the planar theta web. The leftmost one is not shy since it is not a train track, and also because the $2$-labeled edge is horizontal. The middle and right diagrams are shy, but only the middle one comes from the resolution of a knot diagram.
\end{exmp}

\begin{figure}[hbtp]
        \centering
        \begin{tikzpicture}
            \node at (-4,0) {\begin{tikzpicture}[anchorbase,scale=0.4]
            \draw[very thick] (0,0) ellipse (1cm and 2cm);
            \draw[very thick,->] (-1,0)--(0,0) node[pos=1, above] {$2$};
            \draw[very thick] (0,0)--(1,0);
        \end{tikzpicture}};
        
            \node at (0,0) {\begin{tikzpicture}[anchorbase,scale=0.4]
            \draw[very thick,xslant=0.25] (0,0)--(0,2) to[in=180,out=90] (1,3) to[in=90,out=0] (2,2)--(2,0) to[in=0,out=-90] (1,-1) to[out=180,in=-90] (0,0);
            \draw[very thick,xslant=0.25] (0,2) to[in=0,out=90] (-1,3) to[in=90,out=180] (-2,2)--(-2,0) to[in=180,out=-90] (-1,-1) to[out=0,in=-90] (0,0);
            \draw[very thick,->,xslant=0.25] (0,0)--(0,1.5) node[pos=0.5, right] {$2$};
            \draw[very thick,->,xslant=0.25] (2,2)--(2,0.5);
            \draw[very thick,->,xslant=0.25] (-2,2)--(-2,0.5);
        \end{tikzpicture}};
            \node at (4,0) {\begin{tikzpicture}[anchorbase,scale=0.2]
            
            \draw[very thick,xslant=0.25] (0,0)--(0,0.5) to[in=-90,out=90] (1,2) to[out=90,in=-90] (0,3.5)--(0,4) to[out=90,in=180] (2,6) to[out=0,in=90] (4,4)--(4,0) to[out=-90,in=0] (2,-2) to[out=180,in=-90] (0,0);
            \draw[very thick,xslant=0.25] (0,0.5) to[in=-90,out=90] (-1,2) to[out=90,in=-90] (0,3.5);
            \draw[very thick,->,xslant=0.25] (4,3)--(4,1) node[pos=0.5, right] {$2$};
            \draw[very thick,->,xslant=0.25] (0,0.5) to[in=-90,out=90] (1,2);
            \draw[very thick,>-,xslant=0.25] (-1,2) to[out=90,in=-90] (0,3.5);
        \end{tikzpicture}};
        \end{tikzpicture}
        \caption{Three diagrams of the planar theta web. The unlabeled edges are understood as labeled by $1$. }
        \label{fig:shy theta}
    \end{figure}

By (b), these local extrema must be in the interior of edges. It is clear that every $\gl_N$ web can be perturbed to be shy by planar isotopies. Moreover, if all crossings of a shy diagram are resolved in a shy fashion, every web in the (generalized) cube of resolutions is again shy. 

Given a $\gl_N$ web $\Gamma$, Murakami--Ohtsuki--Yamada \cite{murakami1998homily} defined the evaluation $\lra{\Gamma}_N$ as a state sum \begin{equation}\label{eqn:MOY defn}
\lra{\Gamma}_N=\sum_{\sigma}\left(\prod_{v\in V(\Gamma)}\operatorname{wt}(v,\sigma)\right)q^{\operatorname{rot}(\sigma)}\in\Z[q,q^{-1}],
\end{equation}where\begin{itemize}
        \item $\sigma$ is a \textit{coloring} of $\Gamma$, i.e., it assigns an $l(e)$-element subset of $\{-N+1,-N+3,\dots,N-1\}$ to each edge $e$, subject to the flow condition at each vertex.

        \item $\operatorname{wt}(v,\sigma)$ is the \textit{weight} at $v$, defined as \[q^{a(v)b(v)/2-\pi(\sigma(l(v)),\sigma(r(v)))},\]where $\pi(\sigma(l(v)),\sigma(r(v)))$ is a certain non-negative integer depending on $v$ and $\sigma$.

        \item $\operatorname{rot}(\sigma)$ is the \textit{rotation number}, defined as \[\sum_{C}\sigma(C)\operatorname{rot}(C),\]where the sum is over all simple closed curves $C$ in the \textit{parallel resolution} of $\Gamma$, defined by replacing every edge $e$ in $\Gamma$ with $l(e)$ copies of parallel edges with appropriate colors (cf. \cite{murakami1998homily}*{Figure 1.4}), and $\operatorname{rot}(C)\in\{\pm 1\}$ is the rotation number of $C$.
    \end{itemize}

The following observation provides a diagrammatic upper bound for the quantum grading of the MOY evaluation of a shy $\gl_N$ web. 

\begin{prop}\label{prop:energy bound}
    Let $\Gamma$ be a shy $\gl_N$ web. Assume that $\Gamma$ has $k$ local maxima in the $y$ direction, let $l_1,l_2,\dots,l_k$ be the labels of the edges carrying them (an edge carrying several local maxima being counted once for each of them), and assume that $2l_i\le N$ for every $i$. Define the {\normalfont energy} of $\Gamma$ as \[e(\Gamma)=\sum_{i=1}^kl_i(N-l_i)+\frac{1}{2}\sum_{v\in V(\Gamma)}a(v)b(v).\]Then the quantum grading of the $\gl_N$ evaluation of $\Gamma$ is bounded above by\begin{equation}\label{eqn:energy}
        \max\deg_q\lra{\Gamma}_N\le e(\Gamma).
    \end{equation}
\end{prop}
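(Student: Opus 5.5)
The plan is to bound each term of the state sum \eqref{eqn:MOY defn} separately. For a coloring $\sigma$, the $q$-degree of its term is
\[
\sum_{v}\Bigl(\tfrac{a(v)b(v)}{2}-\pi(\sigma(l(v)),\sigma(r(v)))\Bigr)+\operatorname{rot}(\sigma).
\]
Since each $\pi(\cdot,\cdot)\ge 0$, the vertex part is at most $\frac12\sum_v a(v)b(v)$. So it suffices to prove
\[
\operatorname{rot}(\sigma)\le\sum_{i=1}^k l_i(N-l_i)
\]
for every coloring $\sigma$. This reduces the whole problem to a statement about the simple closed curves in the parallel resolution of $\Gamma$.

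\textbf{Step 1: the parallel resolution.} Because $\Gamma$ is a train track (condition (a)), its parallel resolution can be drawn so that the curves are disjoint, embedded, and run parallel to the edges of $\Gamma$. By (b) and (c), every local maximum of a resolved curve $C$ in the $y$ direction lies over a local maximum of $\Gamma$, which sits in the interior of some edge. Moreover, the $l_i$ parallel strands through the $i$-th maximum of $\Gamma$ each have a local maximum there. They also traverse it in the same horizontal direction, since they are parallel copies of one oriented edge.

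\textbf{Step 2: a formula for $\operatorname{rot}(C)$.} For a simple closed curve $C$, $\operatorname{rot}(C)=+1$ exactly when $C$ is traversed leftward at its highest point. Let $p(C)$ be the local maximum of $\Gamma$ carrying the top of $C$. Then
\[
\operatorname{rot}(\sigma)=\sum_{i=1}^k \varepsilon_i\sum_{C:\,p(C)=i}\sigma(C),
\]
where $\varepsilon_i=\pm1$ records whether the edge at the $i$-th maximum points left or right. The sign $\varepsilon_i$ depends only on $i$, because all strands through a given maximum move in the same direction.

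\textbf{Step 3: the local estimate.} The curves with $p(C)=i$ are among the $l_i$ strands over the $i$-th maximum. By the coloring rule these strands carry distinct elements of the set $\sigma(e)\subset\{-N+1,-N+3,\dots,N-1\}$. The $i$-th summand is therefore $\pm$ the sum over some subset $T$ of at most $l_i$ distinct such numbers. The largest possible value of such a sum is attained by the $l_i$ largest elements $N-1,N-3,\dots,N-2l_i+1$, whose sum is $l_i(N-l_i)$. This is where $2l_i\le N$ enters: it makes all these elements positive, so taking fewer than $l_i$ of them cannot do better. The case $\varepsilon_i=-1$ follows from the symmetry $s\mapsto -s$. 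Summing over $i$ gives the required bound on $\operatorname{rot}(\sigma)$.

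The main obstacle is Step 1: making precise that the parallel resolution consists of disjoint embedded curves whose extrema sit exactly over the extrema of $\Gamma$. This must hold in particular near trivalent vertices, where strands split or merge. I would handle it with the train-track condition: near a vertex, the strands of the thick edge separate into those of the two thin edges without turning through a horizontal tangent, by (b). The remaining subtlety is the convention for $\operatorname{rot}(C)$ and $\sigma(C)$ in \cite{murakami1998homily}. I would check that the "top point moving leftward" criterion matches their sign convention, and otherwise replace $\varepsilon_i$ by $-\varepsilon_i$ throughout; the bound is unaffected by this symmetry.
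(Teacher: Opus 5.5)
Your proposal is correct and follows essentially the same route as the paper: you discard the nonnegative $\pi$ terms, assign each curve of the parallel resolution to the local maximum of $\Gamma$ carrying its highest point, note that the curves assigned to a given maximum share one rotation number and carry pairwise distinct colors from the label set of that edge, and bound the resulting signed sum by $l_i(N-l_i)$ using $2l_i\le N$. The paper phrases the final step as $r_i(N-r_i)\le l_i(N-l_i)$ for $r_i\le l_i\le N/2$, which is equivalent to your observation that the $l_i$ largest colors are all positive.
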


    When $\Gamma$ arises as an \textit{MOY track}, i.e., a resolution of a braid closure, it refines the bound in \cite{wu2013colored}*{Proposition 4.3}: Wu's bound reads \[\max\deg_q\lra{\Gamma}_N\le kL(N-L)+\frac{1}{2}\sum_{v\in V(\Gamma)}a(v)b(v),\]where $L$ is the average of the $l_i$'s. The Cauchy--Schwarz inequality gives \[kL(N-L)+\frac{1}{2}\sum_{v\in V(\Gamma)}a(v)b(v)\ge\sum_{i=1}^kl_i(N-l_i)+\frac{1}{2}\sum_{v\in V(\Gamma)}a(v)b(v)=e(\Gamma),\]so our bound is sharper with equality when all $l_i$'s are the same.

The condition $2l_i\le N$ is essential for the proof, but it is harmless for our application: in the cube of resolutions of an $m$-colored diagram, every local maximum lies on an edge labeled by $m$, and we will reduce to the case $2m\le N$ by a duality argument at the beginning of the proof of Theorem~\ref{thm:main thm}.

\begin{proof}%[Proof of Proposition~\ref{prop:energy bound}]
    It suffices to prove that for every coloring $\sigma$ of $\Gamma$, the $q$-exponent in the summand associated to $\sigma$ in (\ref{eqn:MOY defn}) is at most $e(\Gamma)$, i.e., \[\sum_{v\in V(\Gamma)}\left(\frac{1}{2}a(v)b(v)-\pi(\sigma(l(v)),\sigma(r(v)))\right)+\sum_{C}\sigma(C)\operatorname{rot}(C)\le e(\Gamma).\]Since $\pi$ is non-negative, the first sum is at most $\frac12\sum_{v\in V(\Gamma)}a(v)b(v)$, so it suffices to prove that \begin{equation}\label{eqn:rot bound}\sum_C\sigma(C)\operatorname{rot}(C)\le\sum_{i=1}^kl_i(N-l_i).\end{equation}

    Denote by $p_1,\dots,p_k$ the local maxima of $\Gamma$, so that $p_i$ lies on an edge $e_i$ of label $l_i$. By the shy conditions (a) and (b), each $p_i$ lies in the interior of $e_i$, and the parallel resolution introduces no new local maxima. For a simple closed curve $C$ in the parallel resolution, let $\mu(C)\in\{p_1,\dots,p_k\}$ be a point where $C$ attains its global maximum in the $y$ direction. Since the $l_i$ push-offs of $e_i$ carry pairwise distinct colors, $\mu^{-1}(p_i)$ consists of $r_i\le l_i$ curves whose colors form a subset $S_i\subseteq\sigma(e_i)$ of cardinality $r_i$. Note that here $r_i$ is not necessarily equal to $l_i$ since a closed curve may have more than one local maximum.

    Every curve in $\mu^{-1}(p_i)$ attains its global maximum at $p_i$, and at a global maximum the disc bounded by the curve lies locally below, so the direction of travel determines the rotation number; since the push-offs of $e_i$ are co-oriented, all curves in $\mu^{-1}(p_i)$ share the same rotation number $\varepsilon_i\in\{\pm1\}$. Then \[\sum_{C\in\mu^{-1}(p_i)}\sigma(C)\operatorname{rot}(C)=\varepsilon_i\sum_{c\in S_i}c\le\max_{\#S=r_i}\sum_{c\in S}c=r_i(N-r_i)\le l_i(N-l_i).\]Here the last inequality follows from the assumption $r_i\le l_i\le N/2$. Summing over $i$ gives \eqref{eqn:rot bound}.
\end{proof}

\begin{exmp}\label{exmp:theta}
    The planar theta web $\theta$ with a $2$-labeled edge has evaluation $\lra{\theta}_N=[N][N-1]$, where $[k]=\frac{q^k-q^{-k}}{q-q^{-1}}$ is the quantum integer. The maximal quantum grading is $\max\deg_q\lra{\theta}_N=2N-3$. Now consider the theta web in the two shy positions depicted in Figure~\ref{fig:shy theta}. In the right one the two $1$-labeled edges run monotonically between the two vertices, so there is a single local maximum and it lies on the $2$-labeled edge; assuming $N\ge4$, Proposition~\ref{prop:energy bound} applies with $k=1$ and $l_1=2$, and gives \[\max\deg_q\lra{\theta}_N\le2(N-2)+1=2N-3,\]which is optimal. In the middle one the $2$-labeled edge is monotone and each of the two $1$-labeled edges carries a local maximum, so $k=2$ and $l_1=l_2=1$, and Proposition~\ref{prop:energy bound} gives only \[\max\deg_q\lra{\theta}_N\le2(N-1)+1=2N-1.\]Thus the estimate genuinely depends on the chosen shy position, and is not always sharp.
\end{exmp}

We are now ready to prove our main theorem. One can see from the proof that it is really an assertion at the \textit{decategorified} level, as no consideration regarding the differentials is involved.

\begin{proof}[Proof of Theorem \ref{thm:main thm}]
    We first reduce to the case $2m\le N$. Write $-K$ for the link $K$ with the orientation of every component reversed, and note that the diagram $-D$ obtained from $D$ by reversing all arrows has the same number of crossings and the same number of local maxima as $D$, and also the same writhe, since the sign of a crossing is unchanged when both of its strands are reversed. Since all components of $K$ carry the same color, the correction term of \cite{wu2011generic}*{Lemma 1.8} vanishes at every crossing, so \cite{wu2011generic}*{Corollary 1.10} gives an isomorphism \[\KR_{N,m}^{i,j}(K)\cong\KR_{N,N-m}^{i,j}(-K)\] preserving the bigrading. As $m(N-m)$ and $\min\{m,N-m\}$ are invariant under $m\mapsto N-m$, it suffices to prove the statement when $m\le N-m$, i.e., $2m\le N$.

    After a planar isotopy supported near the crossings, we may assume that $D$ is shy. The complex $\operatorname{CKR}_{N,m}(D)$ that computes the colored $\gl_N$ homology of $D$ is a tensor product of the Rickard complexes associated to every crossing of $D$. Each summand in $\operatorname{CKR}_{N,m}(D)$ is a graded abelian group $\lrb{\Gamma}_N$ that categorifies the $\gl_N$ evaluation $\lra{\Gamma}_N$ \cite{robert2020closed}, where $\Gamma$ is a shy colored $\gl_N$ web arising from the concatenation of terms in the Rickard complexes for crossings in $D$ in a planar-algebra manner. The difference between the quantum grading and the homological grading shifts by $m(N-m)w(D)$ (cf. \cite{ehrig2018functoriality}*{Definition 3.3}), so it suffices to prove that for each $\Gamma$ arising in the generalized cube of resolutions, we have \[\max\deg_q\lra{\Gamma}_N\le m(N-m)k+m^2n.\]The local maxima of $D$ lie away from the crossings, and resolving a crossing into a term of the Rickard complex creates no new local maxima. Hence the local maxima of $\Gamma$ are exactly the $k$ local maxima of $D$, and each of them lies on an unresolved arc of $D$, which carries the label $m$. As $2m\le N$, Proposition~\ref{prop:energy bound} applies with $l_1=\dots=l_k=m$ and gives \[\max\deg_q\lra{\Gamma}_N\le e(\Gamma)=m(N-m)k+\frac{1}{2}\sum_{v\in V(\Gamma)}a(v)b(v),\]so it suffices to prove that \[\frac{1}{2}\sum_{v\in V(\Gamma)}a(v)b(v)\le m^2n.\]

    \begin{figure}[htbp]
        \centering
        \begin{tikzpicture}[scale=0.25]
            \begin{scope}[xslant=0.25]
                \draw[very thick,->] (0,-2)--(0,0) node[pos=0.5, right,font=\tiny] {$a$};
                \draw[very thick] (0,0) to[out=90,in=-90] (2,2);
                \draw[very thick,->] (2,2)--(2,4) node[pos=0.5,left,font=\tiny] {$a+b-i$};
                \draw[very thick] (2,4) to[out=90,in=-90] (0,6);
                \draw[very thick,>-] (0,6)--(0,8) node[pos=0.5, right,font=\tiny] {$b$};
                \draw[very thick,->] (4,-2)--(4,0) node[pos=0.5, right,font=\tiny] {$b$};
                \draw[very thick] (4,0) to[out=90,in=-90] (2,2);
                \draw[very thick] (4,0) to[out=90,in=-90] (6,2);
                \draw[very thick,->] (6,2)--(6,4) node[pos=0.5,left,font=\tiny] {$i$};
                \draw[very thick] (6,4) to[out=90,in=-90] (4,6);
                \draw[very thick,>-] (4,6)--(4,8) node[pos=0.5, right,font=\tiny] {$a$};
                \draw[very thick] (2,4) to[out=90,in=-90] (4,6);
            \end{scope}
        \end{tikzpicture}
        \caption{A typical term in a Rickard complex.}
        \label{fig:rickard}
    \end{figure}

    A typical term (or its mirror) in the Rickard complex for resolving a crossing with two strands labeled $a$ and $b$ has four vertices, as depicted in Figure~\ref{fig:rickard}. In our case $a=b=m$ for each crossing $u$ of $D$, and $0\le i\le m$; note that the label $2m-i$ of the thick edge is at most $N$ because $2m\le N$. The two thin edges at the four vertices are labeled $(m-i,i)$, $(m,m-i)$, $(m,m-i)$ and $(m-i,i)$ respectively. Hence the vertices resolved from $u$ contribute \[\frac{1}{2}\bigl(i(m-i)+m(m-i)+m(m-i)+i(m-i)\bigr)=(m-i)(m+i)=m^2-i^2\le m^2\]to the energy of $\Gamma$. Summing over the $n$ crossings completes the proof.
\end{proof}

\begin{rem}\label{rem:sharper}
    The crossing number term in Theorem~\ref{thm:main thm} is not optimal in general. For instance, when $N=2$ and $m=1$, the standard diagrams of the Hopf link, the right-handed trefoil, and the $(2,5)$-torus knot all show that $n-2$ would already suffice, and is attained. For links with higher braid index, it is possible that this term could be further improved.

    The missing term could come from a refinement of Proposition~\ref{prop:energy bound} of the following shape: if $V(\Gamma)\not=\emptyset$ and the two thin edges at the trivalent vertex $v_0$ of maximal $y$-coordinate lie \emph{above} the thick edge, then \[\max\deg_q\lra{\Gamma}_N\le e(\Gamma)-2a(v_0)b(v_0).\]The heuristic is that the flow condition makes the colors on the two thin edges at $v_0$ disjoint, so that the curves running through them should be counted as a single family of size $a(v_0)+b(v_0)$, replacing $a(N-a)+b(N-b)$ by $(a+b)(N-a-b)$. We do not prove this here: the argument of Proposition~\ref{prop:energy bound} bounds the rotation term and the vertex weights separately, and that is not enough, since the two families of curves in question are assigned to two different local maxima and may well have opposite rotation numbers, in which case the claim above fails for the rotation term alone and can only be recovered from the term $\pi(\sigma(l(v)),\sigma(r(v)))$. For applications to link homology, the assumption of this potential refinement always holds when $m=1$ but not in general.
\end{rem}

\begin{rem}
    One can further derive grading bounds for the $\gl_N$ link homology of links whose components carry different colors, and for colored spatial webs, from Proposition~\ref{prop:energy bound}. In the first case the reduction to $2m\le N$ at the beginning of the proof of Theorem~\ref{thm:main thm} is no longer available in the same form, since the correction term of \cite{wu2011generic}*{Lemma 1.8} need not vanish; one has to keep track of it, or else impose $2l\le N$ on every color $l$ occurring. On the other hand, our strategy of proving Theorem~\ref{thm:main thm} can also recover Ng's results \cites{ng2005legendrian,ng2008skein} without doing induction.
\end{rem}

\subsection*{Note on AI use}Generative AI tools\footnote{The model usage is available upon request; see the statement \href{https://sites.google.com/view/hongjian/research?authuser=0\#h.tvdl6yl4c90r}{here}.} helped find and fix an error in the original proof of Proposition~\ref{prop:energy bound}. They were also used to improve the writing quality of this manuscript.

\subsection*{Acknowledgments}I thank Ciprian Manolescu for his continued encouragement and support. This work was partially supported by the Simons Collaboration grant on New Structures in Low-Dimensional Topology and a Simons Dissertation Fellowship.

%\subsection*{Acknowledgments}This work was partially supported by the Simons Collaboration grant on New Structures in Low-Dimensional Topology and a Simons Dissertation Fellowship.

\bibliographystyle{amsalpha}
\bibliography{ref}

\end{document}